\let\OLDthebibliography\thebibliography  %condense bibliography
\renewcommand\thebibliography[1]{
	\OLDthebibliography{#1}
	\setlength{\parskip}{0pt}
	\setlength{\itemsep}{0pt plus 0.3ex}
}
\theoremstyle{plain}
\newtheorem{thm}{Theorem}
\newtheorem{cor}{Corollary}
\newtheorem{lem}{Lemma}
\theoremstyle{definition}
\newtheorem{rem}{Remark}
\theoremstyle{plain}
\newtheorem{thmintro}{Theorem}
\newtheorem{corintro}{Corollary}[thmintro]
\newcommand{\HH}{\mathbb{H}}
\newcommand{\C}{\mathbb{C}}
\newcommand{\Z}{\mathbb{Z}}
\newcommand{\cA}{\mathcal{A}}
\newcommand{\cL}{\mathcal{L}}
\newcommand{\cS}{\mathcal{S}}
\newcommand{\im}{\operatorname{im}}
\newcommand{\del}{\partial}
\newcommand{\delbar}{{\bar{\partial}}}
\newcommand{\pr}{\operatorname{pr}}
\newcommand{\supp}{\operatorname{supp}}
\DeclareMathOperator{\mult}{mult}
\newcommand{\img}[2][1]{\begin{gathered}\includegraphics[scale=#1]{#2}\end{gathered}}
\newcommand{\Cdot}{{\raisebox{-0.7ex}[0pt][0pt]{\scalebox{2.0}{$\cdot$}}}}
\mathchardef\mhyphen="2D
\title{Some remarks on the Schweitzer complex}
\author{Jonas Stelzig} 
\address{Mathematisches Institut der LMU M\"unchen, Theresienstra{\ss}e 39, 80333 M\"unchen.}
\email{jonas.stelzig@math.lmu.de}
\begin{document}

\begin{abstract}
We prove that the Schweitzer complex is elliptic and its cohomologies define cohomological functors. As applications, we obtain finite dimensionality, a version of Serre duality, restrictions of the behaviour of cohomology in small deformations, and an index formula which turns out to be equivalent to the Hirzebruch-Riemann-Roch relations.
\end{abstract}
\dedicatory{Dedicated to the memory of J.-P.~Demailly}
\maketitle
\section*{Introduction} The Bott-Chern and Aeppli cohomology are classical and well-established invariants of complex manifolds \cite{aeppli_exact_1962, bott_hermitian_1965}. Given a complex manifold $X$, with double complex of forms $(\cA_X(X),\del,\delbar)$ they are defined as
\[
H_{BC}^{p,q}(X):=\frac{\ker\del\cap\ker\delbar}{\im\del\delbar}\qquad\text{and}\qquad H_A^{p,q}(X):=\frac{\ker\del\delbar}{\im\del+\im\delbar}.
\]

M. Schweitzer and J.~P. Demailly \cite{schweitzer_autour_2007, demailly_complex_nodate} have shown that they arise as the cohomology groups in certain degrees of a differential complex $\cL_{p,q}:=(\cL_{p,q}^\Cdot(X),d_{\cL})$, the definition of which we recall below. The cohomologies of this complex appear naturally when considering higher-length Aeppli-Bott-Chern Massey products \cite{milivojevic_stelzig_2022}, but have received relatively little attention otherwise. In this short note, we will establish some basic properties of the Schweitzer complex. More precisely, we show:

\begin{thmintro}\label{thm: ellipticity + cohomological fctr}
Let $X$ be a complex manifold and $\cL_{p,q}(X)$ its Schweitzer complex. 
\begin{enumerate}
\item\label{thm: ellipticity} The complex $\cL_{p,q}^\Cdot(X)$ is elliptic.
\item\label{thm: cohomological fctr} For every $p,q,k\in\Z$, the association $X\mapsto H^k(\cL_{p,q}^\Cdot(X))$ defines a cohomological functor in the sense of \cite{stelzig_structure_2018, stelzig_linear_2021}.
\end{enumerate}
\end{thmintro}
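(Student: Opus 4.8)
The plan is to deduce both parts from a single statement, which I will call the \emph{vanishing lemma}: $H^{\Cdot}(\cL_{p,q}(S))=0$ for every square $S$, in the sense of the structure theory of double complexes of \cite{stelzig_structure_2018} (a square being an acyclic indecomposable $2\times 2$ double complex). First I would record that $\cL_{p,q}$ depends only on the underlying double complex: for any bounded double complex $(A,\partial,\bar\partial)$ there is a complex $\cL_{p,q}(A)$ equal in low total degrees to the quotient complex $(A/(F^pA+\bar F^qA),\,\partial+\bar\partial)$, equal in high total degrees to the subcomplex $(F^pA\cap\bar F^qA,\,\partial+\bar\partial)$, the two being glued by $\partial\bar\partial\colon A^{p-1,q-1}\to A^{p,q}$ (with $F^pA=\bigoplus_{r\ge p}A^{r,\bullet}$ and $\bar F^qA=\bigoplus_{s\ge q}A^{\bullet,s}$; the top term of the former complex, in degree $p+q-2$, is $A^{p-1,q-1}$, and the bottom term of the latter, in degree $p+q$, is $A^{p,q}$, and $\partial\bar\partial$ joins these). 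In particular $A\mapsto\cL_{p,q}(A)$, hence also $A\mapsto H^{\Cdot}(\cL_{p,q}(A))$, is functorial and commutes with finite direct sums. Given the characterisation of cohomological functors in \cite{stelzig_structure_2018,stelzig_linear_2021}, part (2) then reduces to the vanishing lemma.

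\emph{Proof of the vanishing lemma.} For the square $S_{a,b}$ supported in bidegrees $(a,b),(a{+}1,b),(a,b{+}1),(a{+}1,b{+}1)$ the complex $\cL_{p,q}(S_{a,b})$ has total dimension at most $4$, and I would verify acyclicity by a short case analysis in the position of $(a,b)$ relative to $(p,q)$. If $a\le p-2,\ b\le q-2$ or $a\ge p,\ b\ge q$, then $\cL_{p,q}(S_{a,b})=(S_{a,b},\partial+\bar\partial)$ is the total complex of a square, which is acyclic. If $S_{a,b}$ meets exactly one of the walls $\{r=p-1\}$, $\{s=q-1\}$ away from their intersection, then after discarding the summands that are killed in $\cL_{p,q}$ one is left with a single $\partial$- or $\bar\partial$-edge of $S_{a,b}$, i.e.\ an isomorphism $\C\xrightarrow{\sim}\C$, hence again acyclic. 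The only case carrying genuine content is $(a,b)=(p-1,q-1)$: the summands in bidegrees $(p,q-1)$ and $(p-1,q)$ are deleted (the former lies in $F^pA$, the latter in $\bar F^qA$, and neither lies in the high part), so $\cL_{p,q}(S_{p-1,q-1})$ is the two-term complex $\C\xrightarrow{\partial\bar\partial}\C$ in total degrees $p+q-2$ and $p+q$, and here $\partial\bar\partial$ is an isomorphism precisely because $S_{p-1,q-1}$ is a square rather than a sum of zigzags. Thus $\cL_{p,q}(S_{a,b})$ is acyclic in every case. (One may instead feed $A=S_{a,b}$ into the short exact sequence of complexes $0\to(F^pA\cap\bar F^qA,\partial+\bar\partial)\to\cL_{p,q}(A)\to(A/(F^pA+\bar F^qA),\partial+\bar\partial)\to 0$; the two spurious one-dimensional cohomology classes appearing for $S_{p-1,q-1}$ cancel, the connecting map being this same $\partial\bar\partial$.)

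For ellipticity, fix $x\in X$ and $0\ne\xi\in T_x^*X$, and set $v:=\xi^{1,0}$, so $v\ne 0$ and $\bar v:=\xi^{0,1}=\overline v\ne 0$. The principal symbols are $\sigma_\xi(\partial)=c\,(v\wedge-)$, $\sigma_\xi(\bar\partial)=c\,(\bar v\wedge-)$ and $\sigma_\xi(\partial\bar\partial)=\sigma_\xi(\partial)\circ\sigma_\xi(\bar\partial)=c^2\,(v\wedge\bar v\wedge-)$ for a nonzero constant $c$; a comparison of bidegrees shows that the sequence of principal symbols of $\cL_{p,q}^\Cdot(X)$ at $\xi$ is, up to rescaling, exactly the Schweitzer complex $\cL_{p,q}\big(\Lambda^{\Cdot}T_x^*X\otimes\C,\,v\wedge-,\,\bar v\wedge-\big)$ of the bounded double complex given by the complexified exterior algebra with the two anticommuting differentials ``$v\wedge-$'' (bidegree $(1,0)$) and ``$\bar v\wedge-$'' (bidegree $(0,1)$). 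Since $v\ne 0$ and $\bar v\ne 0$, each of $\big(\Lambda^{\Cdot}T_x^*X\otimes\C,v\wedge-\big)$ and $\big(\Lambda^{\Cdot}T_x^*X\otimes\C,\bar v\wedge-\big)$ is the Koszul complex of a nonzero vector, hence exact, so this double complex has vanishing row and column cohomology and is therefore a direct sum of squares by the structure theorem of \cite{stelzig_structure_2018}. By the vanishing lemma, applied summand-wise, its Schweitzer complex is acyclic; thus the symbol sequence at $\xi$ is exact, and since $x$ and $\xi\ne 0$ were arbitrary, $\cL_{p,q}^\Cdot(X)$ is elliptic.

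I expect the main obstacle to be localised entirely in the case $(a,b)=(p-1,q-1)$ of the vanishing lemma, where one must bring in the structure theory of double complexes to know that a square has $\partial\bar\partial\ne 0$: this is the single point distinguishing squares from zigzags, and it is exactly what makes the relevant connecting map, and hence the symbol sequence, exact; the remaining cases are bookkeeping. A minor preliminary point is to fix the meaning of ellipticity for a complex whose differentials have mixed order — here one operator of order $2$ among operators of order $1$ — which I take, following \cite{schweitzer_autour_2007}, as exactness of the sequence of principal symbols, each homogeneous of its own order, at every nonzero cotangent vector; this is precisely what the argument above delivers.
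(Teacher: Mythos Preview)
Your treatment of part~(2) is essentially the paper's: both reduce to showing $\cL_{p,q}$ of a square is acyclic and proceed by a short case analysis on the position of the square relative to the two quadrants. The paper organizes the cases by the cardinality $\#(S\cap T)\in\{0,2,4\}$ and records the three possible shapes $0$, $\C\xrightarrow{\sim}\C$, $\C\hookrightarrow\C^2\twoheadrightarrow\C$; your cases are indexed slightly differently but amount to the same thing. Two minor remarks: you omit the trivially empty case where the square lies entirely outside both quadrants (e.g.\ $a\ge p$, $b\le q-2$), and your indexing places $A^{p,q}$ in degree $p+q$ rather than the paper's $p+q-1$; neither affects the argument.

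For part~(1) your argument is correct and takes a genuinely different route. The paper proves ellipticity by a direct computation: after choosing coordinates with $\xi=dz_1+d\bar z_1$, each form in the symbol complex is decomposed according to whether it contains $\xi^{1,0}$, $\xi^{0,1}$, both, or neither, and explicit preimages are written down degree by degree. You instead observe that the symbol sequence at $\xi$ is (up to harmless rescalings in each degree) precisely $\cL_{p,q}$ of the finite-dimensional double complex $(\Lambda^\Cdot T_x^*X\otimes\C,\ \xi^{1,0}\wedge\,,\ \xi^{0,1}\wedge\,)$; since $\xi^{1,0},\xi^{0,1}\neq 0$, this double complex has exact rows and columns (Koszul), hence by the structure theorem is a finite direct sum of squares, and the vanishing lemma applies. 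This unifies the two parts of the theorem---ellipticity becomes a corollary of the cohomological-functor property---and avoids all coordinate computation. The paper's approach, on the other hand, is self-contained and does not invoke the structure theory of double complexes.
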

From now on, let us assume $X$ to be compact and of dimension $n$. Then we obtain:
\begin{corintro}\label{corintro: Finite dimensionality}
The dimensions $s_{p,q}^k(X):=\dim H^k(\cL_{p,q}(X))$ are finite.
\end{corintro}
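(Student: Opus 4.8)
The plan is to deduce finite dimensionality from the ellipticity statement, Theorem~\ref{thm: ellipticity + cohomological fctr}\,\eqref{thm: ellipticity}, together with the classical fact that an elliptic complex of differential operators on a compact manifold has finite-dimensional cohomology in every degree. The first thing I would record is that this fact applies at all: each term $\cL_{p,q}^k(X)$ is the space of smooth global sections of a vector bundle $E_k$ on $X$ — a finite direct sum of bundles of differential forms of various bidegrees — and $d_\cL$ is built from $\del$, $\delbar$, $\del\delbar$ and bundle projections, hence is a linear differential operator. Since $X$ is compact and the complex $(\cL_{p,q}^\Cdot(X),d_\cL)$ is elliptic, the standard theory then gives $s_{p,q}^k(X)=\dim H^k(\cL_{p,q}^\Cdot(X))<\infty$.

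The mechanism I would invoke (or reproduce in a line) is the usual harmonic argument. Choosing a Hermitian metric on $X$ and Hermitian metrics on the $E_k$ produces formal adjoints $d_\cL^\ast$ and Laplacians $\Delta_k:=d_\cL d_\cL^\ast+d_\cL^\ast d_\cL$ on $\Gamma(X,E_k)$. Exactness of the symbol sequence of the complex — the content of ellipticity — forces each $\Delta_k$ to be an elliptic operator; on the compact manifold $X$, elliptic regularity together with the compactness of Sobolev embeddings then shows that $\Delta_k$ is Fredholm, so $\ker\Delta_k$ is finite-dimensional and one obtains the orthogonal Hodge decomposition $\Gamma(X,E_k)=\ker\Delta_k\oplus\im d_\cL\oplus\im d_\cL^\ast$ into closed subspaces. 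A routine diagram chase identifies $H^k(\cL_{p,q}^\Cdot(X))$ with $\ker\Delta_k$, which settles the claim. All of this is classical, and I would simply point to the standard accounts of elliptic complexes (Atiyah--Bott, Kodaira, Wells).

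The only point that needs care — and the one place I expect any friction — is that $d_\cL$ is not homogeneous of a single order: it has order one on the two ``outer'' ranges of degrees, but in the middle it equals the second-order operator $\del\delbar$, namely $\cL_{p,q}^{p+q-2}(X)=\cA_X^{p-1,q-1}(X)\xrightarrow{\del\delbar}\cA_X^{p,q}(X)=\cL_{p,q}^{p+q-1}(X)$ (the edge cases $p=0$ or $q=0$ are easier). Hence the Laplacians above mix differential orders, and one must use the notion of ellipticity appropriate to complexes of operators of mixed order — the Douglis--Nirenberg setting, which is the sense in which Theorem~\ref{thm: ellipticity + cohomological fctr}\,\eqref{thm: ellipticity} is, and must be, stated — and check that the finite-dimensionality theorem carries over to it, which it does with no essential change (one still constructs a parametrix, now between suitably weighted Sobolev spaces).

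Finally, one can sidestep the analysis entirely using Theorem~\ref{thm: ellipticity + cohomological fctr}\,\eqref{thm: cohomological fctr}: a cohomological functor in the sense of \cite{stelzig_structure_2018, stelzig_linear_2021} annihilates the ``square'' summands of the double complex $(\cA_X(X),\del,\delbar)$ and is therefore computed from its zigzag summands alone. For compact $X$ only finitely many zigzags occur, each with finite multiplicity, since the Dolbeault, de Rham, Bott--Chern, Aeppli and Fr\"olicher-page cohomologies that govern those multiplicities are all finite-dimensional; hence $H^k(\cL_{p,q}^\Cdot(X))$ is a finite direct sum of finite-dimensional contributions. I would present the ellipticity argument as the main one and mention this alternative only in passing.
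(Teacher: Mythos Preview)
Your proposal is correct and in fact contains both arguments the paper gives, just with the emphasis reversed. The paper's primary proof is the algebraic one: after showing that $H_{S_{p,q}}^k$ vanishes on squares, it simply invokes the structure theorem from \cite{stelzig_structure_2018} that for compact $X$ one has $A_X=A_X^{sq}\oplus A_X^{zig}$ with $A_X^{zig}$ finite-dimensional, and concludes. Your ellipticity argument appears only as a one-line remark afterwards, citing \cite[Prop.~6.5]{atiyah-bott_lefschetz_67}.

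Two minor points of comparison. First, your justification for the finiteness of the zigzag part (via finite-dimensionality of Dolbeault, Bott--Chern, etc.) is more roundabout than the paper's direct appeal to the structure theorem, which already asserts $A_X^{zig}$ is finite-dimensional. Second, your handling of the mixed-order issue is on target: the naive Laplacian $d_\cL d_\cL^\ast+d_\cL^\ast d_\cL$ at degrees $p+q-2$ and $p+q-1$ is \emph{not} elliptic, and the paper makes exactly the same observation later (in the proof of semi-continuity), pointing to \cite[\S6]{atiyah-bott_lefschetz_67} and to the explicit modified Laplacians in \cite{schweitzer_autour_2007, demailly_complex_nodate}. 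So your caveat is necessary, not optional; the second paragraph of your sketch is strictly speaking only valid away from those two degrees without the modification you flag in the third.
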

This has also been shown by Demailly \cite[Thm. 12.4]{demailly_complex_nodate}, using a different argument. Further, we have
\begin{corintro}\label{corintro: Perfect pairing}
For $X$ connected, wedge product and integration induce a perfect pairing 
\[H^k(\cL_{p,q}(X))\times H^{2n-k-1}(\cL_{n-p+1,n-q+1}(X))\to\C.\]
\end{corintro}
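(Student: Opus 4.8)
The plan is to deduce the pairing from the general Serre-type duality valid for any elliptic complex of differential operators on a compact manifold, the crux being to identify the associated \emph{transposed} complex of $\cL_{p,q}(X)$ with $\cL_{n-p+1,n-q+1}(X)$. Recall the mechanism to be invoked: if $(\cE^{\Cdot},D)$ is an elliptic complex with $\cE^{k}=\Gamma(X,E^{k})$ the global sections of Hermitian vector bundles, $0\le k\le N$, over a compact oriented manifold $X$ of real dimension $m$, then, setting $E^{\vee,k}:=(E^{N-k})^{*}\otimes\Lambda^{m}_{\C}T^{*}X$, the fibrewise transposes of the operators $D$ assemble, up to signs, into another elliptic complex $\cE^{\vee,\Cdot}=\Gamma(X,E^{\vee,\Cdot})$, and fibrewise contraction followed by integration over $X$ induces a perfect pairing $H^{k}(\cE^{\Cdot})\times H^{N-k}(\cE^{\vee,\Cdot})\to\C$. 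This rests on the Hodge decomposition for elliptic complexes together with a Hodge-$\ast$-type isomorphism $\cH^{k}(\cE^{\Cdot})\cong\cH^{N-k}(\cE^{\vee,\Cdot})$ under which the pairing is essentially the $L^{2}$-inner product, whence nondegeneracy; for the Hodge theory of elliptic complexes see e.g.\ \cite{demailly_complex_nodate}. By Theorem~\ref{thm: ellipticity + cohomological fctr} the complex $\cL_{p,q}(X)=(\cL_{p,q}^{\Cdot}(X),d_{\cL})$ is elliptic; writing $\cA^{r,s}$ for the $(r,s)$-forms, i.e.\ global sections of a bundle $\Lambda^{r,s}$, each term $\cL^{k}_{p,q}(X)$ is a direct sum of such spaces $\cA^{r,s}$ with $(r,s)$ running over an index set depending on $p,q,k$, and the complex is concentrated in degrees $0,\dots,2n-1$, so that $N=2n-1$ and $m=2n$ in the above.

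The main step is to compute $\cE^{\vee,\Cdot}$ for $\cE^{\Cdot}=\cL^{\Cdot}_{p,q}(X)$. I would begin from the fact that the wedge product $\Lambda^{r,s}\otimes\Lambda^{n-r,n-s}\to\Lambda^{n,n}=\Lambda^{2n}_{\C}T^{*}X$ is a fibrewise perfect pairing, so that $(\Lambda^{r,s})^{*}\otimes\Lambda^{2n}_{\C}T^{*}X\cong\Lambda^{n-r,n-s}$ and, under this identification, the fibrewise contraction pairing becomes the wedge product. Dualizing term by term then amounts to the substitution $(r,s,k)\mapsto(n-r,n-s,2n-1-k)$, which exchanges the index ranges $\{\,r<p,\ s<q\,\}$ and $\{\,r\ge n-p+1,\ s\ge n-q+1\,\}$; in particular it carries the low-degree part of $\cL_{p,q}$ (a truncated quotient complex of $(\cA_{X}(X),d)$) to the high-degree part of $\cL_{n-p+1,n-q+1}$ (a truncated subcomplex), and the $\del\delbar$-map $\cA^{p-1,q-1}\to\cA^{p,q}$ at which $\cL_{p,q}$ ``turns'' to the $\del\delbar$-map $\cA^{n-p,n-q}\to\cA^{n-p+1,n-q+1}$ at which $\cL_{n-p+1,n-q+1}$ turns. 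This gives an isomorphism of the underlying graded bundles. To conclude I must match the differentials: Stokes' theorem yields $\int_{X}(d\alpha)\wedge\beta=\pm\int_{X}\alpha\wedge(d\beta)$ for forms of complementary total degree and, after two integrations by parts, $\int_{X}(\del\delbar\alpha)\wedge\beta=\int_{X}\alpha\wedge(\del\delbar\beta)$ for forms of complementary bidegree, so that the transposes of $d$ and of $\del\delbar$ are $\pm d$ and $\del\delbar$; modulo these signs, which do not affect the isomorphism type of a complex, this identifies $\cE^{\vee,\Cdot}$ with $\cL^{\Cdot}_{n-p+1,n-q+1}(X)$ as elliptic complexes, compatibly with the pairings.

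Putting the two steps together, the perfect pairing supplied by the general mechanism becomes exactly $([\alpha],[\beta])\mapsto\int_{X}\alpha\wedge\beta$ on $H^{k}(\cL_{p,q}(X))\times H^{2n-k-1}(\cL_{n-p+1,n-q+1}(X))$, valued in $\C$ by integration of the top-degree form $\alpha\wedge\beta$; here one uses that $X$ is compact, connected and oriented by its complex structure, so that $\int_{X}$ identifies the top de Rham cohomology with $\C$. This is the assertion of the Corollary. The step I expect to be the main obstacle is the bookkeeping in the middle paragraph: since passing to the transpose interchanges sub- and quotient complexes, one must check that this interchange is compatible with the index substitution and, above all, that the two $\del\delbar$-``turns'' are carried to one another --- this is precisely what forces the shift to $n-p+1,\ n-q+1$ rather than $n-p,\ n-q$, and it is the one place where the exact shape of the Schweitzer differential $d_{\cL}$, not merely the list of bundles it involves, is used.
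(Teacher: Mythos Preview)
Your argument is correct, but it takes a genuinely different route from the paper's. You deduce the pairing from the \emph{ellipticity} of $\cL_{p,q}(X)$ (part~(\ref{thm: ellipticity}) of Theorem~\ref{thm: ellipticity + cohomological fctr}) via Hodge theory and the general Serre duality for elliptic complexes, after identifying the transposed complex with $\cL_{n-p+1,n-q+1}(X)$; the identification of graded bundles and differentials you sketch is accurate, including the point that the $\del\delbar$-turn forces the shift to $(n-p+1,n-q+1)$. The paper instead uses part~(\ref{thm: cohomological fctr}), the \emph{cohomological functor} property: it invokes the general fact \cite[Cor.~20]{stelzig_structure_2018} that for any such functor $H$ integration gives $H(A_X)\cong H(DA_X)$, and then checks purely algebraically that $\cL_{p,q}^k(DA_X)=(\cL_{n-p+1,n-q+1}^{2n-k-1}(A_X))^\vee$, so that dualizing cohomology yields the claim. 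Your analytic route is self-contained and closer to the classical Serre duality for the Dolbeault complex; the paper's algebraic route avoids metrics and Hodge theory entirely and is shorter once the double-complex machinery of \cite{stelzig_structure_2018} is in hand. One place in your argument where you should be explicit: since $d_{\cL}$ has order~$2$ at the $\del\delbar$-step and order~$1$ elsewhere, the ``general mechanism'' you invoke needs the version of harmonic theory for elliptic complexes with operators of non-uniform order (cf.\ the discussion around the Laplacians $\Delta_{p,q}^k$ in \cite{schweitzer_autour_2007, demailly_complex_nodate} and \cite[sect.~6]{atiyah-bott_lefschetz_67}); this is available, but it is worth flagging.
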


\begin{corintro}\label{corintro: Semi-continuity}
The numbers $s_{p,q}^k(X_t)$ vary upper semi-continuously in holomorphic families.
\end{corintro}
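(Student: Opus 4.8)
The plan is to realise the corollary as an instance of the standard semicontinuity principle for smooth families of elliptic complexes on a compact manifold, using only the ellipticity from Theorem~\ref{thm: ellipticity}. Let $\pi\colon\mathcal{X}\to B$ be a holomorphic family of compact complex manifolds; since upper semi-continuity is local on $B$, I fix $t_0\in B$ and aim to bound $t\mapsto s^k_{p,q}(X_t)$ above by $s^k_{p,q}(X_{t_0})$ near $t_0$. By construction every term $\cL_{p,q}^\ell(X_t)$ is a finite direct sum of spaces of global smooth $(r,s)$-forms on $X_t$, hence the space of global sections of a smooth complex vector bundle $E^\ell_t$ on $X_t$, and the differentials $d_{\cL}$ are differential operators assembled from $\del_t$, $\delbar_t$ and (in the middle degrees) $\del_t\delbar_t$. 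Trivialising $\pi$ as a $C^\infty$ fibre bundle over a neighbourhood of $t_0$ (Ehresmann), the $E^\ell_t$ become a smooth family of bundles on the fixed manifold $X_{t_0}$ and the $d_{\cL}$ a smooth family of differential operators between them, and by Theorem~\ref{thm: ellipticity} each $(\cL^\Cdot_{p,q}(X_t),d_{\cL})$ is an elliptic complex.

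Next I would fix a smoothly varying family of Hermitian metrics on the fibres and on the $E^\ell_t$ and invoke the Hodge theory underlying Corollary~\ref{corintro: Finite dimensionality} (cf.\ \cite[Thm.~12.4]{demailly_complex_nodate}): for each $\ell$ and each $t$ there is a suitable elliptic, formally self-adjoint, non-negative operator $\Delta_{\cL,t}^{(\ell)}$ acting on $\cL^\ell_{p,q}(X_t)$ — of order two in general, but of order four in the degrees where the second-order differential $\del\delbar$ intervenes, since there the naive operator $d_{\cL}^\ast d_{\cL}+d_{\cL}d_{\cL}^\ast$ must be replaced by a mixed-order analogue — such that $H^\ell(\cL_{p,q}(X_t))\cong\ker\Delta_{\cL,t}^{(\ell)}$; in particular $s^\ell_{p,q}(X_t)=\dim\ker\Delta_{\cL,t}^{(\ell)}$. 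Under the above $C^\infty$-trivialisation of $\pi$ this becomes a continuous family of elliptic operators on the fixed manifold $X_{t_0}$, depending continuously on $t$ between fixed Sobolev completions.

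It then remains to quote the classical fact that the dimension of the kernel of a continuous family of elliptic, formally self-adjoint, non-negative operators on a compact manifold is upper semi-continuous: choosing $\varepsilon>0$ smaller than the least positive eigenvalue of $\Delta_{\cL,t_0}^{(\ell)}$ and not an eigenvalue of $\Delta_{\cL,t}^{(\ell)}$ for $t$ near $t_0$, the spectral projection onto eigenvalues in $[0,\varepsilon)$ has finite, locally constant rank equal to $s^\ell_{p,q}(X_{t_0})$, and it contains $\ker\Delta_{\cL,t}^{(\ell)}$; hence $s^\ell_{p,q}(X_t)\le s^\ell_{p,q}(X_{t_0})$ near $t_0$, and specialising to $\ell=k$ finishes the argument. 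The main obstacle — though it is entirely routine in this circle of ideas — is the verification of the continuous dependence of $\Delta_{\cL,t}^{(\ell)}$ on $t$ in the correct functional-analytic sense after trivialising $\pi$; this is done exactly as in the Kodaira--Spencer treatment of the $\delbar$-complex and its later adaptations to the Bott--Chern and Aeppli complexes, and goes through verbatim once the ellipticity of Theorem~\ref{thm: ellipticity} is in hand. Equivalently, one may simply cite the general upper-semicontinuity theorem for the cohomology of a smooth family of elliptic complexes of differential operators over a compact manifold, of which the above is a proof.
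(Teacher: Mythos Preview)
Your proposal is correct and follows essentially the same route as the paper: reduce to upper semi-continuity of $\dim\ker$ for a smoothly varying family of elliptic Laplacians attached to the Schweitzer complex, noting that in the two degrees adjacent to the second-order map $\del\delbar$ the naive $d_\cL^\ast d_\cL+d_\cL d_\cL^\ast$ must be replaced by a modified (fourth-order) operator as in \cite{schweitzer_autour_2007, demailly_complex_nodate, atiyah-bott_lefschetz_67}. The paper's version is simply terser, treating the generic degrees directly and deferring the two exceptional degrees to the known Bott--Chern/Aeppli case.
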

We will illustrate that this last result gives new restrictions on the $E_1$-isomorphism-type under small deformations. In particular, one obtains semi-continuity results even for classical objects such as certain differentials in the Fr\"olicher spectral sequence.

By ellipticity, we may apply the Atiyah-Singer index theorem and obtain equalities between the Euler-characteristics $\chi_{p,q}(X)$ of $\cL_{p,q}(X)$ and certain expressions $td_{p,q}(X)$ in characteristic numbers.
\begin{thmintro}\label{thm: ABC=Hirz}
The relations $\chi_{p,q}(X)=td_{p,q}(X)$ are equivalent to the Hirzebruch-Riemann-Roch relations.
\end{thmintro}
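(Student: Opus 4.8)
By Theorem~\ref{thm: ellipticity} the complex $\cL_{p,q}^\Cdot(X)$ is elliptic, so the Atiyah--Singer index theorem identifies its Euler characteristic $\chi_{p,q}(X)$ with the topological index $td_{p,q}(X)$, a universal polynomial in the Chern numbers of $X$. The plan is to compute both sides explicitly and observe that, as $(p,q)$ ranges over all pairs, the relations $\chi_{p,q}(X)=td_{p,q}(X)$ form an invertible linear transform of the relations $\chi(X,\Omega^a_X)=\int_X\operatorname{ch}(\Omega^a_X)\operatorname{td}(X)$, $a=0,\dots,n$, which together are one packaging of the Hirzebruch--Riemann--Roch relations.

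The key algebraic input is the identity
\[
\chi_{p,q}(X)\;=\;\sum_{a<p}(-1)^a\,\chi(X,\Omega^a_X)\;+\;\sum_{b<q}(-1)^b\,\chi(X,\Omega^b_X)\;-\;e(X),
\]
valid for every compact complex manifold ($e(X)$ the topological Euler characteristic). Here one uses that $\cL_{p,q}^\Cdot$ is obtained by gluing, via the second-order operator $\del\delbar$, its ``lower half'' --- the quotient of the de Rham complex by the subcomplex $\cA^{\ge p,\Cdot}+\cA^{\Cdot,\ge q}$ of forms of holomorphic degree $\ge p$ or anti-holomorphic degree $\ge q$ --- to a shift of its ``upper half'' --- the subcomplex $\cA^{\ge p,\ge q}$. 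Consequently $\chi_{p,q}(X)=\chi\big(\cA^{<p,<q}(X),\bar d\big)-\chi\big(\cA^{\ge p,\ge q}(X),d\big)$, and the elementary four-term exact sequence
\[
0\to(\cA^{\ge p,\ge q},d)\to(\cA^{\Cdot,\Cdot},d)\to\cL_{p,n+1}^\Cdot\oplus\cL_{n+1,q}^\Cdot\to(\cA^{<p,<q},\bar d)\to0
\]
of complexes on $X$ rewrites this difference as $\chi_{p,n+1}(X)+\chi_{n+1,q}(X)-e(X)$. Finally $\cL_{p,n+1}^\Cdot$ and $\cL_{n+1,q}^\Cdot$ are genuine quotients of the (elliptic) de Rham complex by the (elliptic) subcomplexes of forms of holomorphic, resp.\ anti-holomorphic, degree $\ge p$, resp.\ $\ge q$; filtering the subcomplex by that degree realises each graded piece as a $\delbar$- (resp.\ $\del$-) resolution of $\Omega^a_X$ (resp.\ of $\overline{\Omega^b_X}$), giving $\chi_{p,n+1}(X)=\sum_{a<p}(-1)^a\chi(X,\Omega^a_X)$ and, using conjugation symmetry of Hodge numbers, $\chi_{n+1,q}(X)=\sum_{b<q}(-1)^b\chi(X,\Omega^b_X)$. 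The only subtlety is that the two ``halves'' of $\cL_{p,q}^\Cdot$ (and the subcomplex $\cA^{\ge p,\ge q}$) are non-elliptic with infinite-dimensional cohomology, so additivity of the Euler characteristic along the above sequences is not immediate; one circumvents this by viewing $X\mapsto H^k(\cL_{p,q}^\Cdot(X))$ as a cohomological functor (Theorem~\ref{thm: cohomological fctr}) and decomposing $\cA_X$ into squares and zigzags \cite{stelzig_structure_2018}, on each of which all complexes involved are finite-dimensional.

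On the topological side one proceeds indirectly. Since $td_{p,q}(X)$ is a characteristic number, it is determined by its values on generators of $\Omega^U_*\otimes\Q$ --- in particular on compact Kähler manifolds $M$, where the Fr\"olicher spectral sequence degenerates and classical Riemann--Roch holds. For such $M$ the algebraic identity and Atiyah--Singer give
\[
td_{p,q}(M)=\sum_{a<p}(-1)^a\!\int_M\!\operatorname{ch}(\Omega^a_M)\operatorname{td}(M)+\sum_{b<q}(-1)^b\!\int_M\!\operatorname{ch}(\Omega^b_M)\operatorname{td}(M)-\int_M c_n(M),
\]
and since both sides are characteristic numbers this identity persists for all compact complex $X$. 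Subtracting it from the algebraic identity and using the unconditional Gauss--Bonnet equality $e(X)=\int_X c_n(X)$ yields
\[
\chi_{p,q}(X)-td_{p,q}(X)=\sum_{a<p}(-1)^a v_a(X)+\sum_{b<q}(-1)^b v_b(X),\qquad v_a(X):=\chi(X,\Omega^a_X)-\textstyle\int_X\operatorname{ch}(\Omega^a_X)\operatorname{td}(X).
\]
Hence the relations $\chi_{p,q}(X)=td_{p,q}(X)$ for all $p,q$ amount to $\sum_{a<p}(-1)^a v_a(X)=0$ for all $p$ (take $q=1$), and running $p=1,2,\dots$ in turn this is equivalent to $v_0(X)=\dots=v_n(X)=0$ --- the Hirzebruch--Riemann--Roch relations. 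Conversely the vanishing of these defects makes every relation $\chi_{p,q}=td_{p,q}$ hold.

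The step I expect to be the main obstacle is identifying $td_{p,q}(X)$ with exactly the displayed combination of characteristic numbers. A direct computation of the $K$-theoretic symbol class of $\cL_{p,q}^\Cdot$ in $K_{\mathrm{cpt}}(T^*X)$ is delicate precisely because the gluing operator $\del\delbar$ is of second order --- it is what makes $\cL_{p,q}^\Cdot$ elliptic even though its two halves are not --- so one is essentially forced into the indirect route through Kähler manifolds, which in turn depends on first having the algebraic identity in hand for \emph{all} compact complex manifolds (hence on the zigzag reduction above).
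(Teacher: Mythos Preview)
Your argument is correct and lands on the same algebraic identity as the paper --- indeed your formula $\chi_{p,q}=\sum_{a<p}(-1)^a\chi_a+\sum_{b<q}(-1)^b\chi_b-e(X)$ is exactly the intermediate line of the paper's computation, which the paper then simplifies (via $\chi_k=(-1)^n\chi_{n-k}$) to $\chi_{p,q}=\sum_{k=p}^{n-q}(-1)^{k+1}\chi_k$. The route, however, is different on both sides.

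On the analytical side, the paper does not split $\cL_{p,q}$ into its two non-elliptic halves or invoke the zigzag decomposition to rescue finite-dimensionality. Instead it uses the known Schweitzer--Demailly quasi-isomorphism $\cS_{p,q}\hookrightarrow\cL_{p,q}$, where $\cS_{p,q}$ is built from the truncated holomorphic and antiholomorphic de~Rham complexes $\cS_p^\Cdot$, $\bar\cS_q^\Cdot$; a single short exact sequence of sheaf complexes $0\to\C\to\cS_p^\Cdot\oplus\bar\cS_q^\Cdot\to\cS_{p,q}^\Cdot\to0$ then gives the Euler-characteristic identity directly, with every hypercohomology group finite from the outset.

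On the topological side, you anticipate that a direct computation of $td_{p,q}$ is ``delicate'' because of the second-order operator $\del\delbar$, and therefore detour through Atiyah--Singer on K\"ahler manifolds plus complex cobordism. In fact this is where the paper's proof is at its most elementary: it never touches the symbol class in $K_{\mathrm{cpt}}(T^*X)$ at all, but simply manipulates the virtual bundle $[\cL_{p,q}^\Cdot]=\sum_k(-1)^k[\cL_{p,q}^k]\in K(X)$, using only the duality $(\cA_X^{r,s})^\vee\cong\cA_X^{n-s,n-r}$, to cancel terms until $[\cL_{p,q}^\Cdot]=\sum_{r=p}^{n-q}(-1)^{r+1}[\cA_X^{r,\Cdot}]$. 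This yields $td_{p,q}=\sum_{r=p}^{n-q}(-1)^{r+1}td_r$ immediately. What your route buys is independence from the resolution $\cS_{p,q}$; what it costs is an appeal to the index theorem for $\cL_{p,q}$ on K\"ahler manifolds (which, as the paper's own Remark notes, is not literally covered by the order-one hypotheses in \cite{atiyah-singer_index_63}) together with the cobordism machinery --- precisely the complications the paper's direct $K(X)$ calculation sidesteps.
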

Theorem \ref{thm: ABC=Hirz} is in accord with a conjecture made in \cite{stelzig_linear_2021}, stating that any universal linear relation between cohomological invariants and Chern numbers of compact complex manifolds of a given dimension is a consequence of the Hirzebruch-Riemann-Roch relations.

\subsection*{Acknowledgments} I thank Rui Coelho, Alexander Milivojevi\'c, Nicola Pia and Giovanni Placini for many inspiring conversations.

\section*{Proofs of the main results}
\subsection*{The Schweitzer complex} Throughout, we fix an $n$-dimensional complex manifold $X$ and we denote by $\cA_X^{p,q}$ the sheaf of smooth complex valued $(p,q)$-forms. Given any fixed pair of integers $p,q\in\Z$, Schweitzer and Demailly \cite{schweitzer_autour_2007, demailly_complex_nodate}, define a simple complex $\cL_{p,q}^\Cdot$ of locally free sheaves as follows:
\begin{align*}
\cL_{p,q}^k&:=\bigoplus_{\substack{r+s=k\\r<p,s<q}} \cA^{r,s}_X&\text{if }k\leq p+q-2\\
\cL_{p,q}^k&:=\bigoplus_{\substack{r+s=k+1\\r\geq p,s\geq q}} \cA^{r,s}_X&\text{if }k\geq p+q-1
\end{align*}
with differential $d_{\cL}$ given by:

\[
\cdots\overset{\pr\circ d}{\longrightarrow}\cL_{p,q}^{p+q-3}\overset{\pr\circ d}{\longrightarrow}\cL_{p,q}^{p+q-2}\overset{\del\delbar}{\longrightarrow}\cL_{p,q}^{p+q-1}\overset{d}{\longrightarrow}\cL_{p,q}^{p+q}\overset{d}{\longrightarrow}\cdots
\]
We illustrate which components of $\cA_X$ contribute to $\cL_{p,q}^\Cdot$ for $n=3$, $(p,q)=(2,1)$:
\[
\begin{tabular}{cccc}
%\midrule
&&$\cA^{2,3}_X$&$\cA^{3,3}_X$\\
%\midrule
&&&\\
&&$\cA^{2,2}_X$&$\cA^{3,2}_X$\\%\hline
&&&\\
%\midrule
&&$\cA^{2,1}_X$&$\cA^{3,1}_X$\\%\hline
&&&\\
%\midrule
$\cA^{0,0}_X$&$\cA^{1,0}_X$&&\\%\hline
%\midrule
\end{tabular}
\]
By construction, one has
\[
H^{p,q}_{BC}(X)=H^{p+q-1}(\cL_{p,q}^\Cdot(X)):=\mathbb{H}^{p+q-1}(\cL_{p,q}^\Cdot)
\]
and
\[
H^{p,q}_{A}(X)=H^{p+q-2}(\cL_{p,q}^\Cdot(X)):=\mathbb{H}^{p+q-2}(\cL_{p,q}^\Cdot)
\]
We will mainly be interested in the differential complex of global sections $\cL_{p,q}^\Cdot(X)$.\footnote{Up to a shift in total degree, this complex is denoted $S_{p,q}(X)$ in \cite{milivojevic_stelzig_2022}. Here we follow the indexing convention used in \cite{schweitzer_autour_2007, demailly_complex_nodate}.}

\subsection*{Cohomological functors} We recall \cite{stelzig_structure_2018, stelzig_linear_2021} that a cohomological functor on the category of compact\footnote{If we consider all complex manifolds, one should modify this definition by lifting the dimension restriction on the target of $H$, but imposing the condition that $\dim H(Z)$ is finite dimensional for any (bounded) indecomposable double complex.} complex manifolds is a functor to the category of finite dimensional vector spaces which factors as $H\circ A$, where $A: X\mapsto A_X:=(\cA_X(X),\del,\delbar)$ is the Dolbeault double complex functor and $H$ is a linear functor which vanishes on `squares', i.e. double complexes of the form
\[
\begin{tikzcd}
\C\ar[r,"\sim"]&\C\\
\C\ar[u,"\sim"]\ar[r,"\sim"]&\C\ar[u,"\sim"]
\end{tikzcd}.
\]
It is clear from the construction that $\cL_{p,q}(X)$ depends only on $A_X$. For any double complex $A$, denote by $\cL_{p,q}(A)$ the Schweitzer complex formed using $A$ instead of $\cA_X(X)$ and denote $H_{S_{p,q}}^k(A):=H^k(\cL_{p,q}(A))$. It is then clear that $H_{S_{p,q}}^k(A)$ is a linear functor (in particular it commutes with direct sums of double complexes) and what remains to show is the following:
\begin{lem}
For any square $Sq$, we have $H_{S_{p,q}}^k(Sq)=0$ for all $p,q,k\in\Z$.
\end{lem}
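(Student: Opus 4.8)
The plan is to compute $\cL_{p,q}(Sq)$ completely explicitly. First observe that translating a double complex only shifts the indices and the cohomological degree: if $Sq$ is supported in bidegrees $(a+i,b+j)$ with $i,j\in\{0,1\}$, then, up to a shift in cohomological degree, $\cL_{p,q}(Sq)$ agrees with $\cL_{p-a,q-b}(Sq')$, where $Sq'$ is the square supported on $\{0,1\}^2$. Hence it suffices to treat $Sq=Sq'$ supported on $\{0,1\}^2$ — with all four structure maps isomorphisms — while letting $(p,q)$ range over $\Z^2$. By definition the degree-$k$ part of $\cL_{p,q}(Sq)$ is $\bigoplus_{r+s=k,\,r<p,\,s<q}Sq^{r,s}$ for $k\le p+q-2$ and $\bigoplus_{r+s=k+1,\,r\ge p,\,s\ge q}Sq^{r,s}$ for $k\ge p+q-1$; thus $\cL_{p,q}(Sq)$ depends on $(p,q)$ only through the position of $p$ and of $q$ relative to $\{0,1\}$, i.e.\ on whether $p\ge 2$, $p=1$ or $p\le 0$, and likewise for $q$.

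This yields nine cases, and in each the complex $\cL_{p,q}(Sq)$ is, up to a degree shift, one of a short list of small complexes. If $p\ge 2$ and $q\ge 2$, the whole window lies in $\{r<p,\,s<q\}$, the projection in $\pr\circ d$ discards nothing (the missing components of $d$ leave the window, where $Sq$ vanishes), and $\cL_{p,q}(Sq)$ is the total complex $\operatorname{Tot}(Sq)$ of $Sq$; symmetrically $p\le 0$, $q\le 0$ gives $\operatorname{Tot}(Sq)$ up to a degree shift. If one of $p,q$ is $\le 0$ and the other $\ge 2$, the window lies entirely in one of the off-diagonal quadrants $\{r<p,\,s\ge q\}$, $\{r\ge p,\,s<q\}$, so $\cL_{p,q}(Sq)=0$. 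If exactly one of $p,q$ equals $1$, exactly one row (resp.\ column) of the window survives, and $\cL_{p,q}(Sq)$ is the corresponding two-term complex $[\C\xrightarrow{\sim}\C]$ with differential $\del$ or $\delbar$. Finally, if $p=q=1$, only $Sq^{0,0}$ (lying in $\{r<p,\,s<q\}$) and $Sq^{1,1}$ (lying in $\{r\ge p,\,s\ge q\}$) survive, and $\cL_{1,1}(Sq)=[Sq^{0,0}\xrightarrow{\del\delbar}Sq^{1,1}]$.

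It then remains to note that each of these complexes is acyclic: $\operatorname{Tot}(Sq)$ because the total complex of a square is acyclic (concretely $\C\to\C^2$ is injective, $\C^2\to\C$ is surjective, and the Euler characteristic is $0$); $[\C\xrightarrow{\sim}\C]$ trivially; $[Sq^{0,0}\xrightarrow{\del\delbar}Sq^{1,1}]$ because $\del\delbar=\del\circ\delbar$ is a composite of two isomorphisms on $Sq$; and the zero complex trivially. Hence $H^k_{S_{p,q}}(Sq)=0$ for all $p,q,k$, which is the lemma. I do not expect a genuine obstacle: the argument is exactly the finite case analysis above, and the only thing requiring care is the bookkeeping in the straddling cases — checking that $\pr\circ d$ really restricts to the claimed $\del$ or $\delbar$, and that the linking differential $\del\delbar$ in total degree $p+q-2$ is correctly accounted for.
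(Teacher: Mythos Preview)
Your proof is correct and follows essentially the same approach as the paper: both reduce to a finite case analysis on the relative position of the square's support and the two quadrants $\{r<p,\,s<q\}$ and $\{r\ge p,\,s\ge q\}$. The paper phrases this as three cases according to $\#(S\cap T)\in\{0,2,4\}$, while you normalize the square to $\{0,1\}^2$ and split into nine cases according to the position of $p$ and $q$ relative to $\{0,1,2\}$; the resulting list of possible complexes ($0$, $\C\xrightarrow{\sim}\C$, and $\operatorname{Tot}(Sq)=[\C\hookrightarrow\C^2\twoheadrightarrow\C]$) and the verification of their acyclicity are the same.
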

\begin{proof}
Any square $Sq$ is concentrated in four bidegrees: 
\[
S:=\supp Sq:=\{(r,s),(r+1,s),(r,s+1),(r+1,s+1)\}.
\]
The structure of $\cL_{p,q}(Sq)$ (and hence $H_{S_{p,q}}(Sq)$) depends on the relative position of $S$ and $T=T_A\cup T_B$ where
\begin{align*}
T_A&:=\{(a,b)\in\Z^2\mid a<p,b<q\}\\
T_B&:=\{(a,b)\in\Z^2\mid a\geq p,b\geq q\}.
\end{align*}
There are three possibilities: $\#(S\cap T)\in\{0,2,4\}$, surveyed in the following table:
\begin{center}
\begin{tabular}{ccc}
\toprule
Case 1: $S\cap T=\emptyset$& Case 2: $\#(S\cap T)=2$ & Case 3: $\#(S\cap T)=4$\\
\cmidrule(r){1-1}\cmidrule(lr){2-2}\cmidrule(l){3-3}
$\img{out}$&$\img{middle}$ or $\img{side}$&$\img{in}$\\
\cmidrule(r){1-1}\cmidrule(lr){2-2}\cmidrule(l){3-3}
$\cL_{p,q}(Sq)=0$&$\cL_{p,q}(Sq)=\C\overset{\simeq}{\longrightarrow}\C$&$\cL_{p,q}(Sq)=\C\hookrightarrow\C^2\twoheadrightarrow\C$\\
\bottomrule
\end{tabular}
\end{center}
In each case, clearly $H_{S_{p,q}}^k(Sq)=H^k(\cL_{p,q}(Sq))=0$ for all $p,q,k$.
\end{proof}

Since for any compact $X$, there exists a decomposition $A_X=A_X^{sq}\oplus A_X^{zig}$ where $A_X^{sq}$ is a direct sum of squares and $A_X^{zig}$ is finite dimensional \cite{stelzig_structure_2018}, this implies Corollary \ref{corintro: Finite dimensionality}. 

\begin{proof}[Proof of Corollary \ref{corintro: Perfect pairing}]
We recall from \cite[Cor. 20]{stelzig_structure_2018} that for any cohomological functor $H$, the integration pairing induces an isomorphism $H(A_X)\cong H(DA_X)$, where $DA_X$ denotes the dual double complex as in \cite{stelzig_structure_2018}. The bigraded components of the dual complex are given by $(DA_X)^{p,q}=(A_X^{p,q})^{\vee}$ and the differentials (up to sign) by pullback with the differentials of $A_X$. Therefore, we have $\cL_{p,q}^k(DA_X)=(\cL_{n-p+1,n-q+1}^{2n-k-1}(A_X))^\vee$. But cohomology of a complex of vector spaces commutes with dualization, so
\[
H^k(\cL_{p,q}(A_X))\cong H^k(\cL_{p,q}(DA_X))=H^{2n-k-1}(\cL_{n-p+1,n-q+1}(X))^\vee.
\]\end{proof}
\subsection*{Ellipticity}	Let $\pi:TX^{\vee}\longrightarrow X$ denote the projection map of the cotangent bundle and for any $\xi=\xi_x\in TX^\vee_x$ denote by $(L^\Cdot,\sigma):=(\pi^*\cL_{p,q}^\Cdot(X)_\xi,\sigma(d_{\cL_{p,q}})(\xi))$ the symbol complex (c.f. \cite{atiyah-bott_lefschetz_67}). To show ellipticity, we have to prove:

\begin{lem}
For any $\xi\neq 0$, the symbol complex $(L^\Cdot,\sigma)$ is exact.
\end{lem}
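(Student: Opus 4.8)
The plan is to deduce this from the Lemma above. The key observation is that, just as the Schweitzer complex $\cL_{p,q}(X)$ depends only on the double complex $A_X$, its symbol complex at $\xi$ is nothing but $\cL_{p,q}$ applied to the \emph{symbol double complex} of $A_X$ at $\xi$; and for $\xi\neq 0$ this symbol double complex turns out to be a direct sum of squares, to which the Lemma applies.

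In more detail, fix $x\in X$ and write $V:=T_x^\vee X\otimes_\R\C=V^{1,0}\oplus V^{0,1}$, so that the fibre of $\cA_X^{r,s}$ at $x$ is $\Lambda^{r,s}V:=\Lambda^rV^{1,0}\otimes\Lambda^sV^{0,1}$. For $\xi\in T_x^\vee X$ put $\alpha:=\xi^{1,0}$ and $\beta:=\xi^{0,1}=\bar\alpha$. With the natural normalization the principal symbols of $\del$ and $\delbar$ at $\xi$ are $\alpha\wedge-$ and $\beta\wedge-$, hence the principal symbols of the operators $\pr\circ d$, $\del\delbar$ and $d$ making up $d_{\cL_{p,q}}$ are obtained by substituting $\alpha\wedge-$ for $\del$ and $\beta\wedge-$ for $\delbar$: in particular the second-order symbol of $\del\delbar$ is $\alpha\wedge\beta\wedge-$ and the projection $\pr$ carries over unchanged. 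Thus $(L^\Cdot,\sigma)$ is exactly the Schweitzer complex $\cL_{p,q}(A_\xi)$ of the bounded double complex $A_\xi:=(\Lambda^{\Cdot,\Cdot}V,\ \alpha\wedge-,\ \beta\wedge-)$.

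Now suppose $\xi\neq 0$; since $\xi$ is real, $\alpha\neq 0$ and $\beta\neq 0$, so the Koszul complexes $B^\Cdot:=(\Lambda^\Cdot V^{1,0},\alpha\wedge-)$ and $C^\Cdot:=(\Lambda^\Cdot V^{0,1},\beta\wedge-)$ are exact (contraction with a vector dual to $\alpha$, resp.\ $\beta$, is a homotopy). A bounded exact complex of finite-dimensional vector spaces is a direct sum of two-term acyclic complexes $\C\xrightarrow{\ \simeq\ }\C$; applying this to $B^\Cdot$ and $C^\Cdot$ and noting that $A_\xi$ is the external tensor product double complex $B^\Cdot\boxtimes C^\Cdot$, we get $A_\xi\cong\bigoplus_{j,k}(D_j\boxtimes E_k)$ with each $D_j,E_k$ two-term acyclic. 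Each $D_j\boxtimes E_k$ is then supported in four bidegrees forming a $2\times 2$ block with all structure maps isomorphisms — i.e.\ a (shifted) square. Since $H_{S_{p,q}}^k$ commutes with direct sums and vanishes on squares by the Lemma, $H^k(L^\Cdot,\sigma)=H^k(\cL_{p,q}(A_\xi))=\bigoplus_{j,k}H_{S_{p,q}}^k(D_j\boxtimes E_k)=0$ for all $k$, so $(L^\Cdot,\sigma)$ is exact.

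The only genuinely non-formal point is the identification in the second paragraph: one must verify that forming principal symbols is compatible with the inhomogeneous-order construction of $d_{\cL_{p,q}}$ (in particular that the \emph{second}-order symbol of $\del\delbar$ is $\alpha\wedge\beta\wedge-$, and that $\pr$ is unaffected), so that the symbol complex is literally $\cL_{p,q}(A_\xi)$. Everything after that is bookkeeping. As a check, one can also argue by hand without invoking the Lemma: $(L^\Cdot,\sigma)$ is glued from the truncated Koszul complex $(\Lambda^{<p}V^{1,0},\alpha\wedge-)\otimes(\Lambda^{<q}V^{0,1},\beta\wedge-)$ in degrees $\leq p+q-2$ and a shift of $(\Lambda^{\geq p}V^{1,0},\alpha\wedge-)\otimes(\Lambda^{\geq q}V^{0,1},\beta\wedge-)$ in degrees $\geq p+q-1$, with connecting map $\alpha\wedge\beta\wedge-$; Künneth together with exactness of the full Koszul complex on $V$ then identifies this connecting map with an isomorphism $\Lambda^{p-1,q-1}V/(\alpha\wedge\Lambda^{p-2,q-1}V+\beta\wedge\Lambda^{p-1,q-2}V)\xrightarrow{\ \sim\ }\{\omega\in\Lambda^{p,q}V:\alpha\wedge\omega=\beta\wedge\omega=0\}$, and the long exact sequence forces $H^k(L^\Cdot,\sigma)=0$.
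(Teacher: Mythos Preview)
Your argument is correct and takes a genuinely different route from the paper. The paper proves exactness by a direct coordinate computation: choosing coordinates so that $\xi=dz_1+d\bar z_1$, it writes an arbitrary cocycle $\omega$ at each degree of the symbol complex in the form $\xi^{1,0}\xi^{0,1}\omega_A+\xi^{1,0}\omega_B+\xi^{0,1}\omega_C+\omega_D$, reads off equations from $\sigma(\omega)=0$, and then exhibits an explicit preimage $\eta$. Your proof instead recognises the symbol complex as $\cL_{p,q}$ applied to the Koszul double complex $A_\xi=(\Lambda^{\Cdot,\Cdot}V,\alpha\wedge-,\beta\wedge-)$, notes that for real $\xi\neq 0$ both partial Koszul complexes are exact and hence $A_\xi$ is a finite direct sum of squares, and then invokes the earlier lemma that $H_{S_{p,q}}^k$ vanishes on squares. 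This is more conceptual and, pleasingly, shows that part~(\ref{thm: ellipticity}) of Theorem~\ref{thm: ellipticity + cohomological fctr} is in fact a consequence of (the argument for) part~(\ref{thm: cohomological fctr}); it also scales effortlessly to all degrees, whereas the paper's computation treats only a representative window and leaves the reader to extrapolate. The paper's approach, on the other hand, is entirely self-contained and makes no appeal to the structure theory of double complexes. Your caveat about the mixed-order nature of $d_{\cL_{p,q}}$ is well placed: one does need to take the second-order principal symbol of $\del\delbar$ to get $\alpha\wedge\beta\wedge-$, but once this is granted the identification $(L^\Cdot,\sigma)=\cL_{p,q}(A_\xi)$ is immediate, and indeed the paper's explicit formulae for $\sigma^{p+q-2}$ etc.\ confirm exactly this.
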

\begin{proof} As is well-known, we have $\sigma(\del)(\xi)=\xi^{1,0}\wedge\_$ and $\sigma(\delbar)(\xi)=\xi^{0,1}\wedge\_$, where the superscripts mean projection to the corresponding bidegree. Without loss of generality, we may pick a local coordinate system $z_1,...,z_n$ around $x$ such that $\xi=dz_1+d\bar{z}_1$. 
	
	Let us check exactness of the following part of the complex. Exactness at stages of lower or higher degree is only notationally more cumbersome.
	\[
	\begin{array}{c}A_{X,x}^{p-3,q-1}\\\oplus\\ A_{X,x}^{p-2,q-2}\\\oplus\\ A_{X,x}^{p-1,q-3}\end{array}\longrightarrow \begin{array}{c}A_{X,x}^{p-2,q-1}\\\oplus\\ A_{X,x}^{p-1,q-2}\end{array}\longrightarrow A_{X,x}^{p-1,q-1}\longrightarrow A_{X,x}^{p,q}\longrightarrow\begin{array}{c}A_{X,x}^{p,q+1}\\\oplus\\ A_{X,x}^{p+1,q}\end{array}\longrightarrow \begin{array}{c}A_{X,x}^{p,q+2}\\\oplus\\ A_{X,x}^{p+1,q+1}\\\oplus\\ A_{X,x}^{p,q+2}\end{array} .
	\]
	The first map is given by 
	 \[
	 \sigma^{p+q-4}: \omega^{p-3,q-1}+\omega^{p-2,q-2}+\omega^{p-1,q-3}\longmapsto \begin{array}{l} \xi^{1,0} \omega^{p-3,q-1}+\xi^{0,1} \omega^{p-2,q-2}+\\
	 \xi^{1,0} \omega^{p-2,q-2}+\xi^{0,1} \omega^{p-1,q-3}\end{array}.	 
	 \]
	The second map is given by
	\[
	\sigma^{p+q-3}:\omega^{p-2,q-1}+\omega^{p-1,q-2}\longmapsto \xi^{1,0} \omega^{p-2,q-1}+\xi^{0,1} \omega^{p-1,q-2}.
	\] 
	The third map is given by
	\[
	\sigma^{p+q-2}: \omega\longmapsto \xi^{1,0}\xi^{0,1}\omega.
	\]
	The fourth and fifth maps are given by $
	\omega\longmapsto \xi \omega$.

	Now assume $\omega=\omega^{p-2,q-1}+\omega^{p-1,q-2}\in\ker\sigma^{p+q-3}$. Write 
	\begin{align*}
	\omega^{p-2,q-1}&=\xi^{1,0}\xi^{0,1} \omega_A+\xi^{1,0} \omega_B+\xi^{0,1} \omega_C+\omega_D\\
	\omega^{p-1,q-2}&=\xi^{1,0}\xi^{0,1} \omega'_A+\xi^{1,0} \omega'_B+\xi^{0,1} \omega'_C+\omega'_D,	
	\end{align*}
	with $\omega_A,\omega'_A,...$ having no summand which contains a factor of $\xi^{1,0}$ or $\xi^{0,1}$. Then $\sigma(\omega)=0$ translates into the three equations
	\begin{align*}
	\xi^{1,0}\xi^{0,1}\omega_C+\xi^{0,1}\xi^{1,0}\omega_B'&=0\\
	\xi^{1,0}\omega_D=\xi^{0,1}\omega'_D=0.
	\end{align*}
	The second and third equation imply $\omega_D=\omega'_D=0$ and the first implies $\omega_C=\omega_B'$. Hence, defining $\eta\in L_x^{p+q-4}$ as follows
	\begin{align*}
	\eta^{p-3,q-1}&:=\xi^{0,1}\omega_A + \omega_B \\
	\eta^{p-2,q-2}&:= \omega_C\\
	\eta^{p-1,q-3}&:= -\xi^{1,0}\omega_A'+\omega'_C,
	\end{align*}
	one obtains $\sigma(\eta)=\omega$.
	
	Now, let $\omega\in L^{p+q-2}_x=A_{X,x}^{p-1,q-1}$. Write $\omega=\xi^{1,0}\xi^{0,1}\omega_A+\xi^{1,0}\omega_B+\xi^{0,1}\omega_C+\omega_D$ as before. Then $0=\sigma(\omega)=\xi^{1,0}\xi^{0,1}\omega$ implies $\omega_D=0$. Hence, defining $\eta\in L_x^{p+q-3}$ by
	\begin{align*}
	\eta^{p-2,q-1}:=\omega_B+ \frac 1 2 \xi^{0,1}\omega_A \\
	\eta^{p-1,q-2}:=\omega_C -\frac 12 \xi^{1,0}\omega_A,
	\end{align*} 
	we get $\sigma(\eta)=\omega$.

	Next, let $\omega\in L^{p+q-1}_x=A_{X,x}^{p,q}$. Write $\omega=\xi^{1,0}\xi^{0,1}\omega_A+\xi^{1,0}\omega_B+\xi^{0,1}\omega_C+\omega_D$ as above. Then sorting $0=\sigma(\omega)=\xi^{1,0}\omega+\xi^{0,1}\omega$ by bidegree yields 
	\begin{align*}
	\xi^{1,0}\xi^{0,1}\omega_C+\xi^{1,0}\omega_D&=0\\
	\xi^{0,1}\xi^{1,0}\omega_B+\xi^{0,1}\omega_D&=0.
	\end{align*}
	In particular, $\omega_D=-\xi^{0,1}\omega_C$ and therefore $0=\omega_D=\omega_C=\omega_B$. Thus, $\omega=\sigma(\omega_A)$.	Finally, let $\omega=\omega^{p,q+1}+\omega^{p+1,q}\in A_{X,x}^{p,q+1}\oplus A_{X,x}^{p+1,q}=L_x^{p+q}$ s.t. $0=\sigma(\omega)=\xi\wedge\omega$. Again, write 
	\begin{align*}
	\omega^{p,q+1}&=\xi^{1,0}\xi^{0,1}\omega_A+\xi^{1,0}\omega_B+\xi^{0,1}\omega_C+\omega_D\\
	\omega^{p+1,q}&=\xi^{1,0}\xi^{0,1}\omega'_A+\xi^{1,0}\omega'_B+\xi^{0,1}\omega'_C+\omega'_D.
	\end{align*}
	Then $\sigma(\omega)=0$ translates into
	\begin{align*}
		0=&\xi^{0,1}\xi^{1,0}\omega_B+\xi^{0,1}\omega_D\\
		0=&\xi^{1,0}\xi^{0,1}\omega_C+\xi^{1,0}\omega_D+\xi^{0,1}\xi^{1,0}\omega'_B+\xi^{0,1}\omega'_D\\
		0=&\xi^{1,0}\xi^{0,1}\omega'_C+\xi^{1,0}\omega'_D.
	\end{align*}
	This implies $\omega_D=\omega_B=\omega'_C=\omega'_D=0$ and $\omega_C=\omega'_B$. Defining 	
	\[ 
	\eta:=-\xi^{1,0}\omega_A+\omega_C+\xi^{0,1}\omega'_A\in L^{p+q-1}=A^{p,q}_{X,x},
	\]
	we have $\sigma(\eta)=\omega$.
\end{proof}

%\begin{rem}
%	Note that $\del\delbar$ is of order $2$, while all other differentials are of order $1$. Thus, when $p+q\neq k, k-1$, the cohomology $H^k(\cL_{p,q}^\Cdot(X))$ can be computed as the kernel of the operators $\Delta_{p,q}^k:=(d_{\cL_{p,q}}^k)^\ast d_{\cL_{p,q}}^k+ (d_{\cL_{p,q}}^{k-1})(d_{\cL_{p,q}}^{k-1})^\ast$ 
%\end{rem}

\begin{rem}
Ellipticity immediately gives a second proof of Corollary \ref{corintro: Finite dimensionality}, see \cite[Prop. 6.5.]{atiyah-bott_lefschetz_67}
\end{rem}
%\begin{cor}
%The spaces $H^k(\cL^\Cdot_{p,q}(X))$ are finite-dimensional. 
%\end{cor}

\subsection*{Small deformations}
Let us prove Corollary \ref{corintro: Semi-continuity}:

%\begin{cor}
%	The dimensions of the spaces $H^k(\cL_{p,q}^\Cdot(X))$ vary upper semi-continuously in families.
%\end{cor}

\begin{proof}
Pick a hermitian metric. For any $k\neq p+q$, the order of $d_{\cL_{p,q}}^k$ is $1$ and thus for $k\neq p+q,p+q-1$, the operators $\Delta_{p,q}^k:=(d_{\cL_{p,q}}^k)^\ast d_{\cL_{p,q}}^k+ d_{\cL_{p,q}}^{k-1}(d_{\cL_{p,q}}^{k-1})^\ast$ are elliptic and their kernel is isomorphic to $H^k(\cL_{p,q}^\Cdot(X))$. They vary smoothly in families and hence the result follows. For $k=p+q,p+q-1$, i.e. for Bott-Chern and Aeppli cohomology, the Corollary is known. (In that case, $\Delta_{p,q}^k$ as defined above is not elliptic, but an appropriate modification of it is, see  \cite[sect. 6]{atiyah-bott_lefschetz_67} for a general statement or \cite{schweitzer_autour_2007, demailly_complex_nodate} for an explicit construction in this case.)
\end{proof}
As an example we use this result to obtain semicontinuity properties for a classical object: the Fr\"olicher spectral sequence \cite{frolicher_relations_1955}. We denote by $e_r^{p,q}:=\dim E_r^{p,q}$ the dimensions of the bigraded pieces on the $r$-th page of this spectral sequence.  The dimensions on the first page $e_1^{p,q}=h^{p,q}_{\delbar}$ are known to behave upper semi-continuously, but  for later pages this is false in general. However, denoting by $FD^{p,q}:=e_1^{p,q}-e_\infty^{p,q}$, we may show:
\begin{cor}
Let $X_t$ be a small deformation with $X_0=X$ compact of dimension $n$. Then for $t$ sufficiently close to zero, there are inequalities
\[FD^{0,1}(X_0)\geq FD^{0,1}(X_t)\quad \text{and}\quad FD^{0,n-1}(X_0)\geq FD^{0,n-1}(X_t).\] 
%\[
%\del_1^{0,1}(t):H^{0,1}(X_t)\to H^{1,1}(X_t)
%\]
%the Fr\"olicher differential on the first page in degree $(0,1)$. Then the number 
%\[
%I(t):=\dim \im \del_1^{0,1}(t)
%\] 
%varies upper semi-continuously. In particular, if $\del_1^{0,1}(0)=0$, then also $\del_1^{0,1}(t)=0$ for all nearby $t$.
\end{cor}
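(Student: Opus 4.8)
The plan is to establish the exact formulas
\[FD^{0,1}(X)=h^{0,1}_{\delbar}(X)+s_{1,0}^0(X)-b_1(X),\qquad FD^{0,n-1}(X)=h^{n,0}_{\delbar}(X)+s_{1,0}^{n-1}(X)-b_n(X),\]
where $b_k:=\dim H^k_{dR}(X)$, and then to read off the result: on the right of each formula the Hodge numbers $h^{0,1}_{\delbar},h^{n,0}_{\delbar}$ are upper semi-continuous in holomorphic families (classically, by semicontinuity of coherent cohomology), the Schweitzer dimensions $s_{1,0}^0,s_{1,0}^{n-1}$ are upper semi-continuous by Corollary~\ref{corintro: Semi-continuity}, and $b_1,b_n$ are locally constant by Ehresmann. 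As upper semicontinuity is stable under sums and under subtracting a constant, $FD^{0,1}(X_t)$ and $FD^{0,n-1}(X_t)$ are upper semi-continuous at $t=0$, and being $\Z_{\geq0}$-valued they satisfy the asserted inequalities for $t$ near $0$. The key structural input is that, up to a shift of one in cohomological degree, $\cL^\Cdot_{1,0}(X)$ is exactly the Hodge-filtration subcomplex $\big(\bigoplus_{r\geq 1}\cA^{r,s}_X(X),d\big)$ of the de Rham complex; thus $H^{k}(\cL^\Cdot_{1,0}(X))=H^{k+1}\big(\bigoplus_{r\geq 1}\cA^{r,s}_X(X),d\big)$, whose image in $H^{k+1}_{dR}(X)$ is, by definition, $F^1H^{k+1}_{dR}(X)$. (Since $H^{p,q}_{BC}=\mathbb{H}^{p+q-1}(\cL^\Cdot_{p,q})$, one moreover has $s_{1,0}^0=h^{1,0}_{BC}$.)

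First I would treat $FD^{0,1}$. In total degree $1$ the Hodge filtration of $H^1_{dR}(X)$ has only the two graded pieces $E_\infty^{1,0}$ and $E_\infty^{0,1}$, so $e_\infty^{0,1}=b_1-e_\infty^{1,0}$ and therefore $FD^{0,1}=e_1^{0,1}-e_\infty^{0,1}=h^{0,1}_{\delbar}+e_\infty^{1,0}-b_1$. It then suffices to identify $e_\infty^{1,0}=\dim F^1H^1_{dR}(X)$ with $s_{1,0}^0$. This follows once one knows that $H^1\big(\bigoplus_{r\geq 1}\cA^{r,s}_X(X),d\big)\to H^1_{dR}(X)$ is injective: its source is the space of $d$-closed holomorphic $1$-forms, and such a form that is also $d$-exact equals $df$ with $f$ holomorphic, hence locally constant, hence $0$. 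So $\dim F^1H^1_{dR}(X)=\dim H^0(\cL^\Cdot_{1,0}(X))=s_{1,0}^0$, giving the first formula.

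For $FD^{0,n-1}$ the ``two graded pieces'' shortcut is not available, so I would instead pass to the long exact sequence of $0\to\bigoplus_{r\geq 1}\cA^{r,s}_X(X)\to\big(\cA_X(X),d\big)\to\big(\cA^{0,\Cdot}_X(X),\delbar\big)\to 0$ together with the edge-map identity $e_\infty^{0,k}=\rk\big(H^k_{dR}(X)\to H^{0,k}_{\delbar}(X)\big)$. This rewrites $FD^{0,n-1}$ as the dimension of the kernel of $H^{n-1}(\cL^\Cdot_{1,0}(X))\to H^n_{dR}(X)$, i.e. as $s_{1,0}^{n-1}-\dim F^1H^n_{dR}(X)$. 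Finally, Poincaré duality is compatible with the Hodge filtration, so $\dim F^1H^n_{dR}(X)+\dim F^nH^n_{dR}(X)=b_n$; and $F^nH^n_{dR}(X)$ is the image of $H^0(X,\Omega^n_X)=H^{n,0}_{\delbar}(X)$ in $H^n_{dR}(X)$, a map that is injective because a $d$-exact holomorphic $n$-form $\omega$ has $\int_X\omega\wedge\bar\omega=0$ by Stokes and hence vanishes. Thus $\dim F^1H^n_{dR}(X)=b_n-h^{n,0}_{\delbar}$, which is the second formula.

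The heart of the argument is getting those two formulas: recognising that the Fr\"olicher defect $e_1^{p,q}-e_\infty^{p,q}$ at these two edge bidegrees can be rerouted through the Schweitzer complex $\cL_{1,0}$ (the Hodge-filtration subcomplex), and that the connecting maps that occur are under control precisely because $d$-closed holomorphic $1$-forms, respectively holomorphic $n$-forms, inject into de Rham cohomology — which is exactly what kills an otherwise uncontrollable kernel dimension and leaves a sum of semicontinuous quantities and a topological constant. I expect the main obstacle to be that for bidegrees other than $(0,1)$ and $(0,n-1)$ no such clean rewriting seems available: the relevant connecting/edge maps then have kernels whose jumps have no fixed sign, and the statement indeed fails there.
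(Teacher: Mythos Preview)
Your proof is correct, and the two displayed formulas you obtain agree with the paper's (after the harmless identifications $s_{1,0}^0=h_{BC}^{1,0}=h_{BC}^{0,1}$ and $h^{n,0}_{\delbar}=h_{BC}^{0,n}$). The route, however, is genuinely different. The paper argues via the structure theory of double complexes: it decomposes $A_X$ into squares and zigzags and computes each of $b_k$, $h_{BC}^{0,1}$, $e_\infty^{0,1}$, $s_{1,0}^{n-1}$, $FD^{0,n-1}$ as an explicit $\Z$-linear combination of zigzag multiplicities, from which the formulas drop out combinatorially (the paper in fact only writes the case $n=3$ to keep the pictures manageable). You instead exploit the observation that $\cL_{1,0}^\Cdot$ is, up to a shift, the Hodge-filtration subcomplex $F^1\cA_X$, and then use the long exact sequence of $0\to F^1\cA\to\cA\to\cA^{0,\Cdot}\to 0$, the edge identification $e_\infty^{0,k}=\rk(H^k_{dR}\to H^{0,k}_{\delbar})$, and the Serre-duality symmetry $e_\infty^{p,n-p}=e_\infty^{n-p,p}$ (your ``Poincar\'e duality compatible with the Hodge filtration'') to control $\dim F^1H^n_{dR}$. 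Your argument is more elementary in that it avoids the zigzag machinery entirely and works uniformly in $n$; the paper's approach, on the other hand, makes transparent \emph{which} indecomposables contribute to each quantity and thereby explains structurally why the two edge bidegrees $(0,1)$ and $(0,n-1)$ are special. One small remark: your injectivity arguments for holomorphic $1$-forms and holomorphic $n$-forms into de Rham cohomology use compactness of $X$, which you should state explicitly (it is of course part of the hypotheses).
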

\begin{proof}
In order to avoid heavy notation which obscures the idea of the proof, we do the proof in the case of $3$-folds, where it is easier to draw all necessary diagrams. We use \cite{khovanov_faithful_2020, stelzig_structure_2018} that $A_X$ may be decomposed into a direct sum of indecomposable double complexes, namely infinitely many squares and finitely many zigzags. The multiplicities $\mult_Z(A_X)=:\#Z(A_X)$ of these summands are an isomorphism invariant of $A_X$. Furthermore, for each zigzag, its images under the involutions $\tau:(p,q)\mapsto(q,p)$ `flipping along the diagonal' and $\sigma:(p,q)\mapsto (n-p,n-q)$ `flipping along the main antidiagonal' appear with the same multiplicity, so, denoting by $Z'$ the sum of the elements in the $\langle\sigma,\tau\rangle$-orbit of $Z$ we may write $\#Z'=\#Z$. Finally, certain zigzags always have zero multiplicity (`only dots in the corners'), see \cite[Ch. 4]{stelzig_structure_2018}. With this understood we compute:
\[
b_1(X):=2\cdot\#\img[.8]{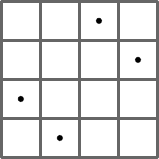}\,(A_X) + \#\img[.8]{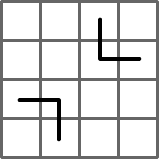}\,(A_X)\quad\text{and}\quad h_{BC}^{0,1}(X)=\#\img[.8]{S110_3}\,(A_X),
\]
whereas
\[
e_{\infty}^{0,1}(X)=\#\img[.8]{S110_3}\,(A_X) + \#\img[.8]{S100_3}\,(A_X).
\]
%
%\[
%h_{\delbar}^{0,1}=\#\img{S110_3} + \#\img{S100_3} + \#\img{S1101_3} + \img{S1201_3}
%\]
%\[
%e_{\infty}^{0,1}=\#\img{S110_3} + \#\img{S100_3}
%\]
Therefore $e^{0,1}_\infty=b_1-h_{BC}^{0,1}$. Since $b_1$ stays constant in families and $h_{BC}^{0,1}$ behaves upper semi-continuously, the first inequality, $FD^{0,1}(X) \geq FD^{0,1}(X_t)$, follows. For the second one, we compute
\[
b_3(X)=2\cdot\#\img[.8]{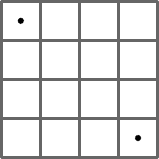}\,(A_X) + 2\cdot\#\img[.8]{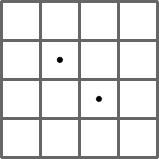}\,(A_X) + 2\cdot\#\img[.8]{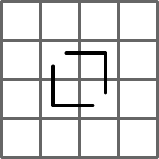}\,(A_X)
\]
and, writing as before $s_{p,q}^k(X):=\dim H^k(\cL_{p,q}(X))$,
\begin{align*}
s_{1,0}^2(X)=&\#\img[.8]{S330_3}\,(A_X) + 2\cdot\#\img[.8]{S321_3}\,(A_X) +2\cdot\#\img[.8]{S311_3}\,(A_X)~+\\
&\#\img[.8]{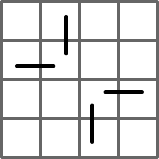}\,(A_X) + \#\img[.8]{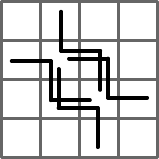}\,(A_X).
\end{align*}
On the other hand, $FD^{0,2}(X)=h^{0,2}(X)-e_\infty^{0,2}(X)$ counts the dimensions of all differentials in the Fr\"olicher spectral sequence starting at $(0,2)$, i.e.
\[
FD^{0,2}(X)=\#\img[.8]{S1102_3}\,(A_X) + \#\img[.8]{S1202_3}\,(A_X).
\]
Therefore,
\[
FD^{0,2}=h_{BC}^{0,3}+s_{1,0}^2-b_3,
\]
which is upper semi-continuous since the first two terms are and $b_3$ is constant. \end{proof}

\begin{rem}
	The proof also shows that $e_\infty^{0,1}$ behaves lower semi-continuously. The result should be compared to the fact that $e_\infty^k=\bigoplus_{p+q=k}e_\infty^{p,q}=b_k$ is constant along deformations and the total Fr\"olicher defect $FD^{k}=\bigoplus_{p+q=k}FD^{p,q}=h_{\delbar}^k-b_k$ is upper semi-continuous.
\end{rem}
%\begin{rem}
%It would be interesting to work out whether this yields new restrictions on the possible $E_1$-isomorphism types of small deformations of a given compact complex manifold.
%\end{rem}

\subsection*{Index formulae}
Denote by $\chi_p(X):=\sum_q(-1)^qh^{p,q}_{\delbar}(X)$ and by $td_p(X):=\int_XTd(X)ch(\Omega^p)$, where $Td(X)$ denotes the Todd class and $ch$ the Chern character. The Hirzebruch-Riemann-Roch relations can be expressed as $\chi_p(X)=td_p(X)$ for all $p\in\Z$, see \cite{atiyah-singer_index_63}.

Similarly, let us denote by $\chi_{p,q}(X):=\sum_{k}(-1)^k\dim H^k(\cL_{p,q}^\Cdot(X))$ the analytical index of $\cL_{p,q}^\Cdot$ and by $td_{p,q}(X)$ its topological index, which is the pairing of the fundamental class $[X]$ with a universal characteristic class which depends only on the $K$-theory class $[\cL_{p,q}^\Cdot]:=\sum_{k}(-1)^k [\cL_{p,q}^k]\in K(X)$. The Atiyah-Singer index theorem \cite{atiyah-singer_index_63, atiyah-singer_index_86} then yields:
 
\begin{thm}[ABC index formulae]\label{thm: ABC index formulae} 
For any compact complex manifold $X$ and $p,q\in\Z$,
\[
\chi_{p,q}(X)=td_{p,q}(X).
\]	
\end{thm}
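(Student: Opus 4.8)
The plan is to apply the Atiyah--Singer index theorem to the elliptic complex $\cL_{p,q}^\Cdot(X)$ (ellipticity being Theorem~\ref{thm: ellipticity + cohomological fctr}\eqref{thm: ellipticity}), which immediately yields $\chi_{p,q}(X)=td_{p,q}(X)$ once we identify the topological index with the stated characteristic-class expression. The point is that the topological index of an elliptic complex depends only on the $K$-theory class $[\cL_{p,q}^\Cdot]=\sum_k(-1)^k[\cL_{p,q}^k]\in K(X)$ of its symbol, and, reading off the definition of the complex, this class is
\[
[\cL_{p,q}^\Cdot]=\sum_{\substack{r<p,\,s<q}}(-1)^{r+s}[\cA_X^{r,s}]-\sum_{\substack{r\geq p,\,s\geq q}}(-1)^{r+s}[\cA_X^{r,s}]
=\sum_{r,s}\epsilon_{p,q}(r,s)\,(-1)^{r+s}[\Lambda^{r}(T^{1,0}X)^\vee\otimes\Lambda^{s}\overline{(T^{1,0}X)^\vee}],
\]
where $\epsilon_{p,q}(r,s)=+1$ on $T_A$, $-1$ on $T_B$, and $0$ elsewhere. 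Here the key combinatorial identity, which I would verify first, is that on the complementary region $T^c=\Z^2\setminus(T_A\cup T_B)$ the signed sum $\sum_{r+s=k}(-1)^{r+s}[\cA_X^{r,s}]$ telescopes away once we split $T^c$ into its two "staircase" pieces $\{r<p,s\geq q\}$ and $\{r\geq p, s<q\}$; consequently $[\cL_{p,q}^\Cdot]=\sum_{r,s}(-1)^{r+s}[\cA_X^{r,s}]-(\text{terms over }T^c)$, and the full sum $\sum_{r,s}(-1)^{r+s}[\cA_X^{r,s}]$ is (up to sign) the symbol class of the de Rham complex.

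Concretely I would package the computation in terms of the Chern character: writing $x_1,\dots,x_n$ for the Chern roots of $T^{1,0}X$, one has $ch(\Omega^p)=e_p(e^{x_1},\dots,e^{x_n})$ (elementary symmetric polynomial) and similarly $ch(\overline{\Omega^q})=e_q(e^{-x_1},\dots,e^{-x_n})$, so that
\[
ch([\cL_{p,q}^\Cdot])=-\Bigl(\sum_{r\geq 0}(-1)^r ch(\Omega^r)\Bigr)\Bigl(\sum_{s\geq 0}(-1)^s ch(\overline{\Omega^s})\Bigr)+\Bigl(\sum_{r<p}(-1)^r ch(\Omega^r)\Bigr)\Bigl(\sum_{s<q}(-1)^s ch(\overline{\Omega^s})\Bigr)\cdot(\pm 1)
\]
after the telescoping — I would sort out the exact bookkeeping of signs and of the two partial sums carefully, since $\sum_{r\geq 0}(-1)^r ch(\Omega^r)=\prod_i(1-e^{x_i})$ has a factor of $x_i$ in each variable (reflecting that de Rham cohomology contributes the Euler class). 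The Atiyah--Singer formula then reads $\chi_{p,q}(X)=\int_X Td(T^{1,0}X)\cdot\overline{Td(T^{1,0}X)}^{-1}\cdot ch([\cL_{p,q}^\Cdot])$ after dividing by the Euler class in the standard way (as in the Hirzebruch--Riemann--Roch derivation), and expanding $ch([\cL_{p,q}^\Cdot])$ by the above exhibits $td_{p,q}(X)$ as an explicit $\Z$-linear combination of the numbers $td_{r}(X)=\int_X Td(X)ch(\Omega^r)$ — and, dually, $\chi_{p,q}(X)$ as the same $\Z$-linear combination of the $\chi_r(X)$, using that the Euler characteristic of the complex $\cL_{p,q}^\Cdot$ is by construction the corresponding alternating sum of Dolbeault Euler characteristics (since $\chi$ of a complex equals $\chi$ of its homology equals the alternating sum over the contributing $\cA_X^{r,s}$). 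Hence the single relation $\chi_{p,q}(X)=td_{p,q}(X)$ is, term by term, the $\Z$-linear combination of the classical relations $\chi_r(X)=td_r(X)$, so it is a consequence of Hirzebruch--Riemann--Roch; conversely, specializing $(p,q)$ appropriately (e.g. $q=n+1$ so that $T_B=\emptyset$ and the complex collapses to a de Rham–Dolbeault–type complex computing $\chi_p$) recovers each individual HRR relation, giving the claimed equivalence.

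The main obstacle I anticipate is purely combinatorial-bookkeeping: getting the signs and the precise form of the telescoping identity right, in particular checking that the "boundary staircases" $T^c$ contribute exactly $\sum_{r<p}(-1)^r ch(\Omega^r)\cdot\sum_{s\geq q}(-1)^s ch(\overline{\Omega^s})+\sum_{r\geq p}(-1)^r ch(\Omega^r)\cdot\sum_{s<q}(-1)^s ch(\overline{\Omega^s})$ and that the two "half-infinite" sums $\sum_{r\geq 0}$ and $\sum_{r<p}$ combine correctly — a shift by one in the degree convention of $\cL_{p,q}^\Cdot$ (the $+1$ appearing in the definition for $k\geq p+q-1$) must be tracked through the alternating signs. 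A clean way to avoid errors is to observe that $\cL_{p,q}^\Cdot$ fits, up to homotopy, into a short exact sequence (of complexes of sheaves) relating it to truncations of the de Rham complex, or equivalently to note that at the level of $K$-theory classes the identity is formally the statement that the two "staircases" $T_A,T_B$ partition a rectangle complementarily inside $\{0\le r,s\}$ together with the de Rham/Koszul relation $\prod_i(1-e^{x_i})\cdot(\text{stuff})$; I would set this up once, carefully, and then the rest is immediate.
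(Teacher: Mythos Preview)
Your core move is exactly the paper's: once ellipticity of $\cL_{p,q}^\Cdot(X)$ is in hand, the Atiyah--Singer index theorem gives $\chi_{p,q}(X)=td_{p,q}(X)$ by definition of the topological index. The paper's proof of this statement is literally that one sentence.

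There is one point you pass over which the paper flags: the standard references for the index theorem treat elliptic complexes whose differentials all have order one, whereas $d_{\cL_{p,q}}^{p+q-2}=\del\delbar$ has order two. The paper sidesteps this by observing that the relation $\chi_{p,q}=td_{p,q}$ also follows \emph{a posteriori} from the equivalence with Hirzebruch--Riemann--Roch (Theorem~\ref{thm: ABC=Hirz}), so the direct appeal to Atiyah--Singer is not strictly necessary.

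The bulk of your proposal is really a sketch of that equivalence theorem rather than of the statement at hand, and there your route diverges from the paper's. You compute entirely in Chern roots and try to telescope the $K$-theory class by splitting $\Z^2$ into $T_A$, $T_B$, and the two staircases. The paper instead separates the two sides cleanly: on the analytical side it uses the quasi-isomorphic subcomplex $\cS_{p,q}^\Cdot\hookrightarrow\cL_{p,q}^\Cdot$ together with the short exact sequence $0\to\C\to\cS_p^\Cdot\oplus\bar\cS_q^\Cdot\to\cS_{p,q}^\Cdot\to 0$ to obtain $\chi_{p,q}=\sum_{k=p}^{n-q}(-1)^{k+1}\chi_k$; on the topological side it does a short $K$-theory manipulation using only the duality $(\cA_X^{p,q})^\vee\cong\cA_X^{n-q,n-p}$ to get $[\cL_{p,q}^\Cdot]=\sum_{r=p}^{n-q}(-1)^{r+1}[\cA_X^{r,\Cdot}]$, hence $td_{p,q}=\sum_{r=p}^{n-q}(-1)^{r+1}td_r$. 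This makes the equivalence with HRR immediate and completely avoids the sign-and-shift bookkeeping you (rightly) identify as the main hazard in your approach. Your method would work once the combinatorics are nailed down, but the paper's duality trick is both shorter and more transparent.
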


\begin{rem}
Strictly speaking, \cite{atiyah-singer_index_63, atiyah-singer_index_86} treat only elliptic complexes where all operators have order one, which is not the case for $\cL_{p,q}(X)$. However, the validity of Theorem \ref{thm: ABC index formulae} also follows a posteriori from Theorem \ref{thm: ABC=Hirz}.
\end{rem}

%\begin{thm}
%	The ABC-index formulae are a equivalent to the Hirzebruch Riemann Roch relations $\chi_p(X)=td_p(X)$ for all $p\geq \Z$.
%\end{thm}
\begin{proof}[Proof of Theorem \ref{thm: ABC=Hirz}]
	
	Recall from \cite{demailly_complex_nodate, schweitzer_autour_2007} that there are subcomplexes $(\cS^\Cdot_p,\del)$ and $(\bar{\cS}^\Cdot_q,\delbar)$ of $\cL_{p,q}$ defined as follows (if $p,q\geq 1$):
	
	\[
	\cS^k_p:=\begin{cases}\Omega_X^k&\text{if } 0\leq k\leq p-1\\
						0&\text{else.}
			\end{cases}
			\qquad
	\bar{\cS}^k_q:=\begin{cases}\bar\Omega_X^k&\text{if } 0\leq k\leq q-1\\
	0&\text{else.}
	\end{cases}
	\] If $p=0$ or $q=0$, one sets instead $\cS^0_p=\C$, resp. $\bar{\cS}^0_q=\C$ and all other components equal to $0$. Set $\cS_{p,q}^k:=\cS^k_p+\bar{\cS}^k_q$, the sum being direct except for $k=0$. It is then known \cite{demailly_complex_nodate, schweitzer_autour_2007} that $\cS_{p,q}\hookrightarrow\cL_{p,q}$ is a quasi-isomorphism. Further, there is a short exact sequence of complexes
	\[
	0\longrightarrow \C\longrightarrow\cS_p^\Cdot\oplus \bar\cS_q^\Cdot\longrightarrow \cS_{p,q}^\Cdot\longrightarrow 0.
	\]
	For a complex $C$ of abelian sheaves on $X$, denote by $\chi(C):=\sum_k (-1)^k\dim \HH^k(C)$ its hypercohomology Euler characteristic. Then, using $\chi_p=(-1)^n\chi_{n-p}$, there is an equality of Euler characteristics:
	
	\begin{align*}
	\chi_{p,q}(X)=\chi(\cS_{p,q}^\Cdot)&=\chi(\cS^\Cdot_p) + \chi(\bar{\cS_q}^\Cdot)- \chi(\C)\\
	&=\sum_{k=0}^{p-1}(-1)^k\chi_k + \sum_{k=0}^{q-1}(-1)^k\chi_k-\sum_{k=0}^n(-1)^{k}\chi_k\\
	&=\sum_{k=0}^{p-1}(-1)^{k}\chi_k +\sum_{k=n-q}^{n}(-1)^k\chi_k-\sum_{k=0}^n(-1)^{k}\chi_k=\sum_{k=p}^{n-q}(-1)^{k+1}\chi_k
	\end{align*}
	 
%	\[
%	\Oh_X\longrightarrow \Omega^1\longrightarrow \cdots\longrightarrow \omega^{p-1}\longrightarrow 0
%	\]	
	To identify the characteristic number expressions, we instead identify the $K$-theory classes of the relevant complexes, using only the relation $(\cA^{p,q}_X)^\vee\cong \cA^{n-q,n-p}_X$.
	Assume for simplicity $p+q\leq n$ and $p>q$, the other cases are similar. The following chain of equalities in $K(X)$ might be easier to follow with the following picture in mind, which illustrates the case $n=3$, $(p,q)=(2,1)$:
	\[
	\begin{array}{|c|c|c|c|}
	\hline
	&&+&-\\\hline
	&&-&+\\\hline
	&&+&-\\\hline
	+&-&&\\\hline
	\end{array}
	\rightsquigarrow
	\begin{array}{|c|c|c|c|}
	\hline
	\phantom{+}&\phantom{+}&+&\\\hline
	&&-&\\\hline
	&&+&-\\\hline
	&&&\\\hline
	\end{array}
	\rightsquigarrow
	\begin{array}{|c|c|c|c|}
	\hline
	\phantom{+}&\phantom{+}&+&\\\hline
	&&-&\phantom{+}\\\hline
	&&+&\\\hline
	&&-&\\\hline
	\end{array}
	\]
		
	\begin{align*}
	\cL_{p,q}^\Cdot=\sum_{k\in\Z} (-1)^k\cL_{p,q}^{k}&=\sum_{k\leq p+q-2}(-1)^k\sum_{\substack{r+s=k\\r<p,s<q}} \cA_X^{r,s} + \sum_{k\geq p+q-1}(-1)^k\sum_{\substack{r+s=k+1\\r\geq p, s\geq q}} \cA_X^{r,s}\\
	&=\sum_{r=0}^{p-1}\sum_{s=0}^{q-1}(-1)^{r+s}\cA_X^{r,s} + \sum_{r=p}^n\sum_{s=q}^n (-1)^{r+s-1}\cA_X^{r,s}\\
	&=\sum_{r=n-q+1}^n\sum_{s=n-p+1}^n (-1)^{r+s}\cA_X^{r,s} + \sum_{r=p}^n\sum_{s=q}^n (-1)^{r+s-1}\cA_X^{r,s}\\
	&= \sum_{r=p}^{n-q} \sum_{s=q}^n(-1)^{r+s-1}\cA_X^{r,s} + \sum_{r=n-q+1}^n\sum_{s=q}^{n-p}(-1)^{r+s-1}\cA_X^{r,s}\\
	&= \sum_{r=p}^{n-q}\sum_{s=0}^n (-1)^{r+s-1}\cA_X^{r,s}= \sum_{r=p}^{n-q}(-1)^{r+1} \cA_X^{r,\Cdot}.
	\end{align*}
	And therefore:
	\[
	td_{p,q}=\sum_{r=p}^{n-q}(-1)^{r+1} td_p,
	\]
	which implies the theorem since $\chi_{p,q}=td_{p,q}$ for all $p,q$ if and only if $\chi_p= td_p$ for all $p$.
\end{proof}

\begin{rem}In particular, we have shown the following relation between Dolbeault and Schweitzer cohomologies:
\[\chi_{p,q}(X)=\sum_{k=p}^{n-q}(-1)^{k+1}\chi_p(X).
\]
\end{rem}

\bibliographystyle{acm}

\end{document}